\newtheorem{theorem}{Theorem}[section]
\newtheorem{lemma}[theorem]{Lemma}
\newtheorem{proposition}[theorem]{Proposition}
\newtheorem{corollary}[theorem]{Corollary}
\theoremstyle{definition}
\numberwithin{equation}{section}
\numberwithin{theorem}{section}
\newcommand{\R}{\mathbb{R}}
\newcommand{\Z}{\mathbb{Z}}
\newcommand{\Q}{\mathbb{Q}}
\newcommand{\C}{\mathbb{C}}
\newcommand{\Gm}{\mathbb{G}_m}
\newcommand{\Qbar}{\overline{\Q}}
\newcommand{\cC}{\mathcal{C}}
\newcommand{\ol}[1]{\overline{#1}}
\newcommand{\oa}{\overline{a}}
\newcommand{\tor}{\text{tor}}
\newcommand{\Kab}{K^{\mathrm{ab}}}
\def\lg{\left\lbrace}
\def\rg{\right\rbrace}
\author[F. Barroero]{Fabrizio Barroero}
\address{Universit\`a di Roma Tre, Dipartimento di Matematica e Fisica, Largo San Mu\-rial\-do 1, 00146 Roma, Italy}
\email{fbarroero@gmail.com}
\author[M. Sha]{Min Sha}
\address{Department of Computing, Macquarie University, Sydney, NSW 2109, Australia}
\email{shamin2010@gmail.com}
\title[Multiplicatively dependent torsion points]{Torsion points with multiplicatively dependent coordinates on elliptic curves}
\date{\today}
\subjclass[2010]{11G05, 11G50, 11U09, 14K05}   
\keywords{Elliptic curve, torsion point, multiplicative dependence, o-minimality}
\begin{document}

\begin{abstract}
In this paper, we study the finiteness problem of torsion points on an elliptic curve whose coordinates 
satisfy some multiplicative dependence relations. In particular, we prove that on an elliptic curve defined over a number field 
 there are only finitely many torsion points 
whose coordinates are multiplicatively dependent. Moreover, we produce an effective result when the elliptic curve is defined over the rational numbers or has complex multiplication.
\end{abstract}

\maketitle

\section{Introduction}

\subsection{Background and motivation}

Let  $\Gm$ be  the multiplicative algebraic group over the complex numbers $\C$. 
In \cite{BMZ99}, Bombieri, Masser and  Zannier initiated 
the study of intersections of geometrically irreducible  algebraic  curves $\cC \subset \Gm^n,n \ge 2$, defined over a number field $K$, and  a union of proper algebraic subgroups of $\Gm^n$. 
It is well known that  each such subgroup of $\Gm^n$ is defined by a finite set of equations of the shape $X_1^{k_1}\cdots X_n^{k_n}=1$, with integer exponents not all zero. 
That is, the work~\cite{BMZ99} is about multiplicative dependence of points on a curve. 
They proved, that under the assumption that $\cC$ is not contained in
any translate of a proper algebraic subgroup of $\Gm^n$, the points on $\cC(\overline{\Q})$ 
with multiplicatively dependent coordinates form a set of bounded (absolute logarithmic) Weil  height. 

More recently, a new point of view was introduced in~\cite{OSSZa} 
by establishing  the structure of   points with multiplicatively dependent coordinates in 
$\cC(\Kab)$,  where $\Kab$ is the maximal abelian extension of $K$. 
In turn this implies that the set of such points is finite if $\cC$ is of positive genus, 
which has already been extended to the genus zero case \cite{OSSZb}. 
In particular, for an elliptic curve $E$ defined over $K$ with complex multiplication (CM), 
since the field generated over $K$ by all the torsion points of $E$ is contained in $\Kab$, 
it follows that there are only finitely many torsion points of $E$ with multiplicatively dependent coordinates. By this, here and in the rest of the paper, we mean that we fix a model of $E$ in $\mathbb{P}^2$, for instance the Weiertrass model, and take affine coordinates $(X,Y)$ so that the origin of $E$ becomes the point at infinity.

In this paper, we want to study the finiteness problem of torsion points on $E$ 
whose coordinates are multiplicatively dependent by dropping the CM condition. 

It is not hard to find elliptic curves that have some torsion points whose coordinates are multiplicatively dependent. 
For instance, the point $(1,1)$ has order four on the elliptic curve of equation $Y^2=X^3-X^2+X$. 
Moreover, for the curve $E$ defined by $Y^2 = X(X - 1)(X - \zeta^2)$, where $\zeta \ne \pm 1$ is a root of unity, 
by \cite[Proposition 4.3]{Stoll}, 
 $(\zeta, \sqrt{\zeta(\zeta-1)(\zeta-\zeta^2)})$ is a torsion point on $E$, 
 whose coordinates are certainly multiplicatively dependent.

\subsection{Main results and methods} 
\label{sec:intro}

It is in fact a consequence of the Zilber-Pink Conjectures (see, e.g., \cite{Pink}) that, for a fixed elliptic curve over the complex numbers, there are at most finitely many torsion points with multiplicatively dependent coordinates.

In this paper, we confirm this fact for elliptic curves over the algebraic numbers, and we provide an effective proof in some special cases.

The following theorem is our first result. Note that it gives a special case of a conjecture of Pink (Conjecture 5.1 in \cite{Pink}).

\begin{theorem}\label{mainthm}
	Let $E$ be an elliptic curve defined over a number field $K$, and let $n \ge 1$ be an integer. 
	Let $\cC$ be an irreducible curve in $E\times \Gm^n$, also defined over $K$, with coordinates $(X,Y, Z_1, \dots , Z_n)$ such that $(X,Y)$ is not a torsion point of $E$ and  $Z_1, \dots , Z_n$ are multiplicatively independent. Then, there are at most finitely many  points $c\in \cC(\C)$ such that $(X(c),Y(c))$ is a torsion point of $E$ and  $Z_1(c), \dots , Z_n(c)$ are multiplicatively dependent.
\end{theorem}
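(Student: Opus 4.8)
The plan is to run the Pila--Zannier strategy on the semiabelian variety $E\times\Gm^n$, regarding the locus in question as an unlikely intersection: the curve $\cC$ (of dimension $1$) is being met with the ``torsion cosets'' $\{P\}\times H$, where $P\in E$ is a torsion point and $H\subsetneq\Gm^n$ is a proper subtorus, and each such coset has codimension at least $2$ in $E\times\Gm^n$. First I would clear away the trivial cases. If the projection $\cC\to E$ is constant, it is a non-torsion point by hypothesis and there is nothing to prove; otherwise it is a finite morphism. If the projection $\cC\to\Gm^n$ is constant, then the multiplicative independence of $Z_1,\dots,Z_n$ says that its value has multiplicatively independent coordinates and again there is nothing to prove; otherwise its image $\cC'$ is an irreducible curve. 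Finally, passing to a quotient torus if necessary (and using the multiplicative independence hypothesis to check that this reduction does not make the statement vacuous), we may assume that $\cC'$ is not contained in any coset of a proper subtorus of $\Gm^n$; enlarging $K$ is also harmless.

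The heart of the argument consists of two arithmetic inputs. First, a bounded height statement: if $(X(c),Y(c))$ is a torsion point, then it has vanishing N\'eron--Tate height, hence its $x$-coordinate has bounded Weil height, and since $\cC\to E$ is finite the height machine bounds the Weil height of $c$ on $\cC$; consequently $Z_1(c),\dots,Z_n(c)$ are algebraic numbers of height at most some $H_0=H_0(\cC)$. Second, a bound on the size of the multiplicative relation: writing $N$ for the order of $(X(c),Y(c))$, we have $[K(c):K]\ll[K(X(c),Y(c)):K]\ll N^2$, so $Z_1(c),\dots,Z_n(c)$ lie in a number field of degree $\ll N^2$ and have height at most $H_0$; a quantitative form of the Bombieri--Masser--Zannier bounded-height theorem (invoking lower bounds for heights of Lehmer--Dobrowolski type) then provides a relation $\prod_jZ_j(c)^{k_j}=1$ with $0\ne\bo{k}\in\Z^n$ and $\|\bo{k}\|_\infty\ll N^{c_0}$ for a positive $c_0$.

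Now the o-minimal step. Uniformise $E(\C)=\C/(\Z\omega_1+\Z\omega_2)$ by the Weierstrass function $\wp$ and $\Gm^n(\C)=\C^n/\Z^n$ by $\bo{z}\mapsto\exp(2\pi i\bo{z})$, giving $\pi\colon\C^{1+n}\to E(\C)\times\Gm^n(\C)$, and let $\widetilde{\cC}=\pi^{-1}(\cC(\C))\cap\mathcal D$, where $\mathcal D$ is the product of a fundamental parallelogram for $E$ and of $n$ fundamental strips; this set is definable in $\R_{\mathrm{an},\exp}$. In the coordinates $u=s\omega_1+t\omega_2$ and $v_j=x_j+iy_j$ on $\mathcal D$, the point $(X(c),Y(c))$ is $N$-torsion precisely when $s,t\in\frac1N\Z$, while $\prod_jZ_j(c)^{k_j}=1$ is equivalent to $\sum_jk_jy_j=0$ together with $m:=\sum_jk_jx_j\in\Z$, and $|m|\le n\|\bo{k}\|_\infty\ll N^{c_0}$ since $x_j\in[0,1)$. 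Hence a point $c$ as in the theorem, of torsion order $N$, yields a rational point of height $\ll N^{c_0}$ on a suitable definable incidence set over $\widetilde{\cC}$ that records the auxiliary data $(sN,tN,N,\bo{k},m)$. Since the Galois orbit over $K$ of a point of order $N$ on $E$ has size $\gg_\epsilon N^{1-\epsilon}$ (Serre's open image theorem in the non-CM case, class field theory in the CM case), and distinct points of $\mathcal D$ have distinct images under $\pi$, an infinite set of $c$ as in the theorem would force, for arbitrarily large $N$, at least $\gg_\epsilon N^{1-\epsilon}$ rational points of height $\ll N^{c_0}$ on this definable set. For $\epsilon$ small compared with $1/c_0$ this contradicts the Pila--Wilkie counting theorem unless the definable set contains a connected semialgebraic subset of positive dimension; its image under $\pi$ is then Zariski dense in $\cC$, and the Ax--Lindemann theorem for $E\times\Gm^n$ (a case of Ax's theorem on the functional Schanuel statement) forces $\cC$ into a coset of a proper algebraic subgroup of $E\times\Gm^n$ --- contradicting the reductions of the first paragraph.

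The step I expect to be the main obstacle is the coupling of the two arithmetic scales: Pila--Wilkie counts rational points of bounded height, so everything hinges on bounding the size $\|\bo{k}\|_\infty$ of the multiplicative relation by a fixed power of the torsion order $N$. This phenomenon is absent in the pure Manin--Mumford situation, where the ``special'' $\Gm^n$-condition is itself a torsion condition; here multiplicatively dependent tuples of bounded height can a priori require relations of large size, and extracting the bound $\|\bo{k}\|_\infty\ll N^{c_0}$ needs a genuinely quantitative version of the Bombieri--Masser--Zannier height bound together with height lower bounds, as well as some care in arranging the incidence set so that it is o-minimally definable and the relevant points are honestly rational of the advertised height. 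Secondary points requiring attention are the descent to the case where $\cC'$ generates $\Gm^n$ up to a coset, and the verification that the (mixed) Ax--Lindemann input applies --- for a curve this last point is elementary, since a connected positive-dimensional semialgebraic set whose $\pi$-image lies on an algebraic curve must be a coset.
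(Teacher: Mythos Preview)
Your outline follows the same Pila--Zannier strategy as the paper: bounded height from the torsion condition, a polynomial bound on the exponent vector $\bo{k}$, an o-minimal counting step, and an Ax--type functional transcendence statement to rule out semialgebraic curves. The arithmetic inputs you list (height bounds, David/Serre for the Galois orbit, Masser/Dobrowolski for the size of the relation) match the paper's, and your worry about bounding $\|\bo{k}\|_\infty$ is misplaced: this is done in a few lines via Masser's lemma together with a Blanksby--Montgomery/Dobrowolski lower bound, exactly as you sketch.

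There is, however, a genuine gap in the counting step. You invoke the plain Pila--Wilkie theorem on ``a suitable definable incidence set over $\widetilde{\cC}$ that records the auxiliary data $(sN,tN,N,\bo{k},m)$''. But the points you want to count are not rational points of that incidence set: only the auxiliary coordinates are rational, while the $\widetilde{\cC}$-coordinates are transcendental. If you project to the auxiliary coordinates and apply Pila--Wilkie there, the projected set contains, over each point of $\widetilde{\cC}$, the full linear space of admissible $(N,\bo{k},m)$, so its algebraic part is everything and Pila--Wilkie yields nothing. Even if you fix $\bo{k}$ first (which you may, since a multiplicative relation is Galois-invariant) and project the resulting one-dimensional set to the $(s,t,m)$-coordinates, a semialgebraic arc in the \emph{image} does not by itself give you a curve in $\widetilde{\cC}$ along which the Betti coordinates are algebraically constrained; your sentence ``its image under $\pi$ is then Zariski dense in $\cC$'' is precisely the step that does not follow from Pila--Wilkie alone.

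This is exactly why the paper appeals to Corollary~7.2 of Habegger--Pila rather than to Pila--Wilkie. That result is tailored to definable \emph{families}: fixing $(a,\bar b)$, one counts points of the fibre $W_{a,\bar b}\subseteq\R^3\times S$ whose $\R^3$-coordinates are rational of height $\le T$, and the conclusion is a definable path $\delta:[0,1]\to W_{a,\bar b}$ with $\pi_1\circ\delta$ semialgebraic \emph{and} $\pi_2\circ\delta$ non-constant on $S$. It is this second clause --- non-constancy on the transcendental side --- that lets one feed the path into Ax's theorem and conclude that $\phi(\cC)\subseteq E\times\Gm$ is a torsion coset. Your proposal becomes correct once you replace ``Pila--Wilkie'' by this semi-rational counting result and organise the argument as a family over $(a,\bo{k})$.
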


If $n=1$ the statement is a special case of the Manin-Mumford conjecture, proved for semi-abelian varieties by Hindry \cite{Hindry88}.
In case $n=2 $ and $Z_1=X$ and $Z_2=Y$, we deduce that on an elliptic curve over a number field there are at most finitely many torsion points whose coordinates are multiplicatively dependent. More generally, we have:

\begin{corollary}\label{cor}
Let  $E$ be an elliptic curve defined over a number field $K$. 
Let $f_1, \dots, f_n \in K(X,Y)$ be multiplicatively independent non-zero rational functions in two variables with coefficients in $K$. 
Then, there are only finitely many torsion points $(x,y)$ on $E$ such that $f_1(x,y), \dots , f_n(x,y)$ are multiplicatively dependent.
\end{corollary}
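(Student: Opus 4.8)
The plan is to deduce the Corollary from Theorem \ref{mainthm} by constructing an appropriate curve $\cC$ inside $E \times \Gm^n$. First I would consider the rational map
$$
\varphi \colon E \dashrightarrow E \times \Gm^n, \qquad (X,Y) \mapsto \bigl(X, Y, f_1(X,Y), \dots, f_n(X,Y)\bigr),
$$
which is defined on the dense open subset of $E$ where all the $f_i$ are defined and nonzero. Let $\cC$ be the Zariski closure of the image $\varphi(E)$; since $E$ is an irreducible curve and $\varphi$ is non-constant, $\cC$ is an irreducible curve in $E \times \Gm^n$, defined over $K$. A torsion point $(x,y)$ of $E$ at which all $f_i$ are defined and nonzero gives a point $c = \varphi(x,y) \in \cC(\C)$ with $(X(c), Y(c)) = (x,y)$ a torsion point and $(Z_1(c), \dots, Z_n(c)) = (f_1(x,y), \dots, f_n(x,y))$; thus $f_1(x,y), \dots, f_n(x,y)$ multiplicatively dependent translates exactly into the dependence hypothesis of Theorem \ref{mainthm}. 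Conversely, a torsion point lying in the (finite) locus where some $f_i$ is undefined or zero contributes only finitely many exceptions, so it suffices to control the torsion points in the domain of $\varphi$.

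The one real hypothesis of Theorem \ref{mainthm} left to verify is that the coordinate functions $Z_1, \dots, Z_n$ on $\cC$ are multiplicatively independent, i.e. that there is no nontrivial relation $\prod_i Z_i^{k_i} = 1$ holding identically on $\cC$. But pulling back along the dominant map $E \dashrightarrow \cC$, such a relation would force $\prod_i f_i(X,Y)^{k_i} = 1$ identically as an element of the function field $K(E)$, hence $\prod_i f_i^{k_i} = 1$ in $K(X,Y) = K(E)$ (the $f_i$ being pulled back from $K(X,Y)$), contradicting the multiplicative independence of $f_1, \dots, f_n$ in $K(X,Y)$ that is assumed in the Corollary. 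Similarly, $(X,Y)$ is not a torsion point of $E$ when regarded as the generic point of $\cC$, since $X$ is non-constant on $\cC$. Thus all hypotheses of Theorem \ref{mainthm} are met.

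Applying Theorem \ref{mainthm} to this $\cC$ yields finitely many points $c \in \cC(\C)$ with $(X(c), Y(c))$ torsion and $Z_1(c), \dots, Z_n(c)$ multiplicatively dependent; projecting back to $E$ via $(X,Y)$ gives finitely many torsion points $(x,y)$ in the domain of $\varphi$ with $f_1(x,y), \dots, f_n(x,y)$ multiplicatively dependent. Adjoining the finitely many torsion points outside the domain of $\varphi$ completes the proof. I expect no serious obstacle here: the main point is simply the bookkeeping of passing between the function-field notion of multiplicative independence of the $f_i$ and the geometric hypothesis of Theorem \ref{mainthm}, together with the routine observation that removing the poles and zeros of finitely many rational functions removes only finitely many torsion points.
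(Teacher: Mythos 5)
Your proposal is correct and is exactly the deduction the paper intends: the Corollary is stated as an immediate consequence of Theorem~\ref{mainthm}, obtained by taking $\cC$ to be the closure of the image of $(X,Y)\mapsto (X,Y,f_1(X,Y),\dots,f_n(X,Y))$, and the paper gives no further argument. The only place where a little care is needed is your parenthetical identification ``$K(X,Y)=K(E)$'': the $f_i$ live a priori in the rational function field in two independent variables, and independence there does not imply independence of their restrictions to $E$ (e.g.\ $f_1=Y^2$ and $f_2$ the Weierstrass cubic are independent in $K(X,Y)$ but equal in $K(E)$), so the hypothesis must be read, as you in effect do, as multiplicative independence of the $f_i$ in $K(E)$ --- which is what your verification of the hypotheses of Theorem~\ref{mainthm} actually uses.
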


The proof of the above theorem can be obtained by slightly modifying the argument used by the first author with Capuano in \cite{BC2017} to prove the following result.

\begin{theorem}[\cite{BC2017}, Theorem 1.2] \label{ThmBC2017}
	Let $\cC\subseteq \mathbb{A}^{2m+1}\times \mathbb{G}_\emph{m}^n$ be an irreducible curve defined over $\Qbar$ with coordinate functions $(X_1,Y_1,\dots ,X_{m}, Y_{m}, \lambda,Z_1, \dots , Z_{n} )$, $\lambda$ non-constant, such that, for every $i=1,\dots , m$, the points $P_i=(X_i,Y_i)$ lie on the Legendre elliptic curve $E_\lambda$ of equation $Y^2=X(X-1)(X-\lambda)$. Suppose moreover that no non-trivial relation among $P_1,\dots , P_m$ holds and that the $Z_1, \dots , Z_{n}$ are multiplicatively independent.
	Then, there are at most finitely many ${c}\in \cC(\C)$ such that there exist $(a_1,\dots, a_m) \in \Z^m\setminus \{0 \}$ and $( b_1, \dots , b_n) \in \Z^n \setminus \{0 \}$ for which
	\begin{equation*}\label{rel2}
	a_1P_1({c})+\dots +a_mP_m ({c})= O \text{  and     } Z_1({c})^{b_1}\cdots Z_n({c})^{b_n}=1 .
	\end{equation*}
\end{theorem}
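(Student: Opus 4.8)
The plan is to prove the statement by the Pila--Zannier method, in the relative (mixed) form appropriate to a semiabelian scheme over a curve; a slight modification of this same argument yields Theorem~\ref{mainthm}. Let $\cA\to S$ be the semiabelian scheme over $S=\mathbb{A}^1\setminus\{0,1\}$ with fibre $E_\lambda^m\times\Gm^n$ over $\lambda$, embedded in $\mathbb{A}^{2m+1}\times\Gm^n$ in the usual way, so that $\cC$ is a curve inside the $(m+n+1)$-dimensional total space $\cA$, and $\cC$ dominates $S$ because $\lambda$ is non-constant. For $\bo a=(a_1,\dots ,a_m)\in\Z^m\setminus\{0\}$ and $\bo b=(b_1,\dots ,b_n)\in\Z^n\setminus\{0\}$ set $\cA_{\bo a,\bo b}=\{a_1X_1+\dots +a_mX_m=O\}\times\{Z_1^{b_1}\cdots Z_n^{b_n}=1\}$, a subgroup scheme of codimension $2$ in $\cA$; a point $c$ as in the statement is exactly a point of $\cC\cap\bigcup_{\bo a,\bo b}\cA_{\bo a,\bo b}$, an atypical intersection. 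By hypothesis $\cC$ is contained in no $\cA_{\bo a,\bo b}$ (that would be a relation among $P_1,\dots ,P_m$, or multiplicative dependence of $Z_1,\dots ,Z_n$), so each $\cC\cap\cA_{\bo a,\bo b}$ is finite; hence it is enough to find $T_0$, depending only on $\cC$, such that every such $c$ lies on some $\cA_{\bo a,\bo b}$ with $\max(|a_i|,|b_j|)\le T_0$.

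First I would fix the uniformisation $u\colon\mathbb{H}\times\C^{m+n}\to\cA(\C)$ sending $(\tau,z_1,\dots ,z_m,w_1,\dots ,w_n)$ to the point over $\lambda(\tau)$ (the modular lambda function) whose $i$-th elliptic coordinate is the image of $z_i$ in $\C/(\Z+\Z\tau)$ and whose $j$-th toric coordinate is $e^{2\pi i w_j}$. On a fundamental set $\mathcal{F}$ for the relevant arithmetic group (generated by $\Gamma(2)$ and the lattice translations), the set $Z:=u^{-1}(\cC)\cap\mathcal{F}$ is definable in the o-minimal structure $\R_{\mathrm{an},\exp}$, using the definability on $\mathcal{F}$ of $\lambda$, of the relevant Weierstrass functions and of $\exp$ in the style of Peterzil--Starchenko. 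The key observation is that a point $c$ as above, with a relation $(\bo a,\bo b)$ of complexity $\Delta:=\max(|a_i|,|b_j|)$, lifts to a point $\tilde c\in Z$ satisfying two \emph{linear} equations with integer coefficients, $a_1z_1+\dots +a_mz_m=p+q\tau$ and $b_1w_1+\dots +b_nw_n=\ell$; and, since $\tilde c\in\mathcal{F}$, the integers $p,q,\ell$ are bounded polynomially in $\Delta$ and in the height $h(c)$. Moreover $c$ is algebraic (the finite set $\cC\cap\cA_{\bo a,\bo b}$ is defined over $\Qbar$), and all its $\mathrm{Gal}(\Qbar/K)$-conjugates are again points of the required type with the \emph{same} $(\bo a,\bo b)$, so a point $c$ with a witnessing relation of complexity $\Delta$ and with $[K(c):K]=D$ produces $\gg D$ distinct points of $Z$, each lying on a $\C$-linear subvariety of $\mathbb{H}\times\C^{m+n}$ cut out by integer data of size $\ll\mathrm{poly}(\Delta,h(c))$.

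Two genuinely arithmetic ingredients are then needed. The first is a \emph{bounded height} statement: the points $c$ in question satisfy $h(c)\ll 1$ uniformly; this is the counterpart, for the present Legendre-times-torus family, of the bounded height theorems for unlikely intersections (Habegger; Bombieri--Masser--Zannier), and it is here that the non-degeneracy hypotheses ($\lambda$ non-constant, no relation among the $P_i$, the $Z_j$ multiplicatively independent) are used. The second is a \emph{Galois lower bound}: a point $c$ with a witnessing relation of complexity $\Delta$ has $[K(c):K]\gg\Delta^{\delta}$ for some fixed $\delta>0$; granted the height bound, I would obtain this by combining the estimate on the successive minima of the lattice of multiplicative relations among algebraic numbers of bounded height and degree (the toric half) with Masser-type lower bounds for the elliptic regulator of $P_1(c),\dots ,P_m(c)$ together with a specialisation argument bounding the complexity of any relation these points satisfy (the elliptic half). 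With both in hand, I would apply the Pila--Wilkie counting theorem to a definable set encoding which integer tuples $(\bo a,\bo b,p,q,\ell)$ give a linear subvariety meeting $Z$: either the number of such tuples of size $\le\Delta$ is $\ll_\varepsilon\Delta^\varepsilon$, which together with the Galois lower bound forces $\Delta$ to be bounded; or $Z$ contains a connected positive-dimensional semialgebraic block, and then an Ax--Lindemann type theorem for this mixed family (available from the functional transcendence work of Pila and his collaborators, or provable directly in this explicit setting) forces $\cC$ into a proper weakly special subvariety of $\cA$, hence a relation among the $P_i$, or multiplicative dependence of the $Z_j$, or $\lambda$ constant --- all excluded. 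Thus every such $c$ has a witnessing relation of complexity $\le T_0$, and the finitely many sets $\cC\cap\cA_{\bo a,\bo b}$ with $\max(|a_i|,|b_j|)\le T_0$ contain all of them.

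The hardest part is expected to be the two arithmetic inputs: the bounded height statement in the family, and --- more delicate --- the Galois lower bound, whose elliptic half forces the o-minimal argument to be coupled with effective transcendence and height estimates (elliptic analogues of Lehmer's problem and of the specialisation of linear relations among sections). By contrast the o-minimal machinery itself --- definability of the uniformisations, Pila--Wilkie, and the Ax--Lindemann input --- is by now standard for families of this shape and should not pose an essential obstruction.
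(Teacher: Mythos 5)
Your outline follows essentially the same route as the proof this paper relies on: the statement is quoted from \cite{BC2017} and proved there (and adapted in Section \ref{sketch} here) by exactly the Pila--Zannier strategy you describe --- Betti/logarithmic coordinates on a definable fundamental set, a bound on the complexity of a witnessing relation in terms of $[K(c):K]$ via David's and Masser's estimates, a Pila--Wilkie-type count (in the semi-rational form of Habegger--Pila, or the CMPZ-style count over tuples that you propose), and Ax's theorem to exclude positive-dimensional algebraic blocks. The only cosmetic discrepancy is your attribution of the bounded-height input: in \cite{BC2017} it comes from Silverman's specialization theorem (using that $\lambda$ is non-constant), not from the Bombieri--Masser--Zannier/Habegger bounded-height theorems, but its role in the argument is exactly as you describe.
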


Note that the above statement implies that, if $P=(X,Y)$ is a point in $E_{\lambda}\left(\overline{\Q(\lambda)}\right)$ that has infinite order, there are at most finitely many $\lambda_0 \in \C$ such that $P(\lambda_0) $ is torsion on $E_{\lambda_0}$ and $X(\lambda_0)$ and $Y(\lambda_0)$ are multiplicatively dependent.

We would like to point out that, although in Theorem \ref{ThmBC2017} only the Legendre family is considered, in the same paper \cite{BC2017} the authors formulate a theorem (Theorem 1.3 there) that deals with general elliptic schemes, so one is not restricting to the Legendre model only.

As mentioned in \cite{BC2017}, the proof  of Theorem \ref{ThmBC2017} goes through when one has a constant elliptic curve over the algebraic numbers, unless $(P_1,\dots , P_m)$ and $(Z_1, \dots, Z_n)$ are both contained in a non-torsion translate of an abelian subvariety of $E^m$ and a subtorus of $\Gm^n$, respectively. Indeed, Silverman's bounded height Theorem \cite{Silverman}, a fundamental ingredient in the proof, requires $\lambda$ not to be constant, but a result of Bombieri, Masser and Zannier \cite{BMZ99} gives boundedness of the height in case the $Z_j$ are independent modulo constants, while Viada \cite{Viada2003} proved the analogous result for a constant elliptic curve $E$ defined over the algebraic numbers.

Under the hypothesis of Theorem \ref{mainthm}, one obtains the required bound on the height simply by the fact that, on the elliptic curve side, we are considering torsion points, which have height bounded by an absolute constant depending only on the elliptic curve.

In Section \ref{sketch} we give a proof of Theorem \ref{mainthm} following the usual Pila-Zannier strategy.

We now turn our attention to the issue of effectivity. 

The only point of the proof of Theorem \ref{mainthm} that is not clearly effective ultimately lies in the point-counting part.  It is likely that the recent work of Jones and Thomas \cite{JT18} or the one of Binyamini \cite{Bin} would give effective Pila-Wilkie type estimates for the number of rational points of bounded height on the subanalytic surface that we consider. As we deal with multiplicative dependencies and not only torsion points, this is not sufficient and we have to invoke a result of Habegger and Pila (Corollary 7.2 of \cite{HabPila16}). Their technique is quite sophisticated and, as far as we know, the possibility of extending the above mentioned effective results to this context has not been worked out yet.

In this paper, we introduce a new method to obtain an effective version of Theorem~\ref{mainthm} 
when the coordinate functions $Z_1, \dots , Z_n$ are rational functions in $X$ and $Y$ and coefficients in $K$, 
which can be used whenever we have a lower bound for the height on $K(E_\tor)^*\setminus \mu_\infty$, i.e., the field $K(E_\tor)$ has the so-called Bogomolov Property. 

Here, $E_\tor$ is the set of $\Qbar$-torsion points on $E$, $K(E_\tor)$ is the field generated over $K$ by all the coordinates of the points in $E_\tor$, 
and $\mu_\infty$ denotes the set of roots of unity in $\C$. 

\begin{theorem}\label{effthm}
Let $E$ be an elliptic curve defined over a number field $K$. 
Let $f_1, \dots, f_n \in K(X,Y)$ be multiplicatively independent non-zero rational functions in two variables with coefficients in $K$. Suppose there exists a positive constant $\epsilon$ such that $h(\alpha)\geq \epsilon$ for all $\alpha \in K(E_\tor)^*\setminus \mu_\infty$. Then, there is an effectively computable constant $C=C(E, f_1, \dots , f_n, \epsilon)$ such that, if $(x,y)$ is a torsion point of $E$ such that $f_1(x,y), \dots , f_n(x,y)$ are multiplicatively dependent, then $(x,y)$ has order at most $C$.
\end{theorem}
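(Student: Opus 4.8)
The plan is to bound the degree $[K(P):K]$ of a torsion point $P=(x,y)$ of $E$ witnessing a multiplicative dependence relation among $f_1(x,y),\dots,f_n(x,y)$, and then to convert this into a bound on the order of $P$ by invoking an effective lower bound for the degree of a torsion point of the fixed curve $E$.

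First I would record the two height estimates that frame the argument. The difference between the Weil heights of the coordinates of a point of $E$ and its N\'eron--Tate height is bounded by an effective constant depending only on $E$, and torsion points have zero canonical height; together with functoriality of heights this gives an effective constant $c_1=c_1(E,f_1,\dots,f_n)$ with $h(f_i(x,y))\le c_1$ for all $i$ and all $(x,y)\in E_\tor$ (a torsion point in the domain of every $f_i$, which is automatic for the points under consideration). On the other hand $f_i(x,y)\in K(E_\tor)^*$, so the Bogomolov hypothesis gives $h(f_i(x,y))\ge\epsilon$ as soon as $f_i(x,y)\notin\mu_\infty$.

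The heart of the proof is to produce a multiplicative relation with \emph{effectively bounded exponents}, modulo roots of unity. Let $\bar\Gamma$ denote the image of the group generated by $f_1(P),\dots,f_n(P)$ and $\mu_\infty$ inside $K(E_\tor)^*/\mu_\infty$; it is free abelian, and since the Weil height is invariant under $\mu_\infty$, homogeneous and subadditive, it induces a norm on $\bar\Gamma\otimes\R$ for which every nonzero lattice point has norm $\ge\epsilon$ (by Bogomolov applied to the whole field $K(E_\tor)$, not merely to the $f_i(P)$), while the generators $\overline{f_i(P)}$ have norm $\le c_1$. Expressing the $\overline{f_i(P)}$ in a basis of $\bar\Gamma$ reduced with respect to this norm, the reduced-basis inequalities force the integer coordinate matrix to have entries of absolute value $\le c(n)\,c_1/\epsilon$, and Siegel's Lemma then yields $b=(b_1,\dots,b_n)\in\Z^n\setminus\{0\}$ with $\max_i|b_i|\le B_0=B_0(n,c_1,\epsilon)$ effective, such that $\prod_i f_i(P)^{b_i}=\zeta\in\mu_\infty$. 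The point of passing to $\bar\Gamma$ is that the coordinates $f_i(P)$ that happen to be roots of unity become trivial and impose no obstruction; it is here that the qualitative Bogomolov hypothesis is turned into a quantitative statement about the lattice of relations, uniformly in $P$.

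Now set $g=\prod_i f_i^{b_i}\in K(E)$. It is non-constant, for otherwise $\prod_i f_i^{b_i}\equiv\zeta$ and raising to the power $\operatorname{ord}(\zeta)$ would contradict the multiplicative independence of $f_1,\dots,f_n$. Its degree is $\le D_0:=B_0\max_i\deg f_i$ and $g(P)=\zeta$. As $g$ has coefficients in $K$ and $\zeta=g(P)\in K(P)$, every $K(\zeta)$-conjugate of $P$ lies in the fibre $g^{-1}(\zeta)$, whence $[K(P):K(\zeta)]=[K(\zeta)(P):K(\zeta)]\le\deg g\le D_0$; moreover $\zeta\in K(E[N])$ with $N=\operatorname{ord}(P)$, so with $m=\operatorname{ord}(\zeta)$ we have $K(\mu_m)=K(\zeta)\subseteq K(E[N])$. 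For $E$ non-CM the image of $\operatorname{Gal}(\Kbar/K)$ in $\mathrm{GL}_2(\widehat\Z)$ has finite, effectively computable index $c_E$, so the orbit of the order-$N$ point $P$ has size $[K(P):K]\ge c_E^{-1}N^2\prod_{p\mid N}(1-p^{-2})\ge c_E^{-1}N^2/2$, while the maximal subextension of $K(E[N])/K$ abelian over $K$ has degree bounded by $c_E'N$ (its Galois group is a subquotient of $\mathrm{GL}_2(\Z/N)^{\mathrm{ab}}$, which equals $(\Z/N)^*$ up to a factor bounded independently of $N$); in particular $[K(\mu_m):K]\le c_E'N$. Combining,
\[
c_E^{-1}N^2/2\ \le\ [K(P):K]\ =\ [K(P):K(\mu_m)]\,[K(\mu_m):K]\ \le\ D_0\,c_E'\,N ,
\]
which bounds $N$ by the effective constant $C:=2c_Ec_E'D_0$. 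When $E$ has complex multiplication the last estimate for $[K(\mu_m):K]$ is no longer available, and the root-of-unity contributions must instead be controlled using the effective forms of the Manin--Mumford statement for $E\times\Gm$ (equivalently, the explicit ray-class-field description of $K(E_\tor)$) that the CM structure makes available.

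The step I expect to be the main obstacle is the construction of the bounded relation $b$: one must convert the mere existence of a Bogomolov constant for $K(E_\tor)$ into an explicit, $P$-uniform bound on the shortest vector of the relation lattice of $f_1(P),\dots,f_n(P)$, and in particular make sure that the presence of root-of-unity values does not spoil this — which is why the reduction to $K(E_\tor)^*/\mu_\infty$ and the use of a reduced basis are essential. Once that is in hand, the remainder is a routine combination of height functoriality with effective information about the Galois representations attached to $E$.
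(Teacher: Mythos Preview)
Your first step---producing a small exponent vector $\ol{b}$ with $\prod_i f_i(x,y)^{b_i}\in\mu_\infty$ from the Bogomolov hypothesis---is exactly what the paper does; the paper simply packages it as a direct application of Masser's proposition on small zeros of sublinear forms, of which your reduced-basis/Siegel argument is a variant. After this the two proofs diverge substantially.

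The paper does \emph{not} split into CM and non-CM cases or attempt to control the order of the root of unity $\zeta$. Instead, for each of the finitely many $\ol{b}$ with $|\ol{b}|\le (nB/\epsilon)^{n-1}$ it forms the curve $C_{\ol{b}}\subset E\times\Gm$ parametrised by $(X,Y)\mapsto\big((X,Y),\prod_i f_i(X,Y)^{b_i}\big)$ and applies Hrushovski's effective Manin--Mumford theorem for semiabelian varieties to bound $|C_{\ol{b}}\cap (E\times\Gm)_\tor|$ by an effective $N(C_{\ol{b}},K)$. Summing over the admissible $\ol{b}$ bounds the total number of torsion $(x,y)$ with multiplicatively dependent $f_i(x,y)$; since this set is stable under $\mathrm{Gal}(\Kbar/K)$, the same constant bounds $[K(x,y):K]$, and David's effective lower bound for the degree of a torsion point then bounds the order.

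Your non-CM argument, by contrast, has a genuine gap. The assertion that the Galois group of the maximal abelian subextension of $K(E[N])/K$ ``is a subquotient of $\mathrm{GL}_2(\Z/N)^{\mathrm{ab}}$'' is false: for $G\subseteq\mathrm{GL}_2(\Z/N)$ the natural map $G^{\mathrm{ab}}\to\mathrm{GL}_2(\Z/N)^{\mathrm{ab}}$ need not be injective (take $G$ a Borel in $\mathrm{GL}_2(\mathbb{F}_p)$: then $|G^{\mathrm{ab}}|=(p-1)^2$ while $|\mathrm{GL}_2(\mathbb{F}_p)^{\mathrm{ab}}|=p-1$). The bound $|G^{\mathrm{ab}}|\ll_{c_E}N$ you need when $[\mathrm{GL}_2(\Z/N):G]\le c_E$ is plausible, but it requires a real argument via the near-perfectness of $\mathrm{SL}_2$ and Goursat-type analysis---it is not the one-liner you give. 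More seriously, your proof does not cover the CM case at all: you concede that the degree comparison collapses there and point to effective Manin--Mumford for $E\times\Gm$ as the fix. But that is precisely the tool the paper uses from the outset, uniformly for both cases; once you invoke Hrushovski anyway, the detour through effective Serre open image (itself a heavy input) buys nothing.
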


Therefore, if we have an effective lower bound for the height on $K(E_\tor)^*\setminus \mu_\infty$ in terms of $K$ and $E$ we get an effective constant $C(E, f_1, \dots , f_n)$ that bounds the order of a torsion $(x,y)$ on $E$ such that $f_1(x,y), \dots , f_n(x,y)$ are multiplicatively dependent.

There are two cases for which we have this positive effective lower bound.

First, it is well-known that, if $E$ has complex multiplication, the field extension $K(E_\tor)/K$ is abelian and, therefore, thanks to a result of Amoroso and Zannier \cite{AZ10}, we have the desired effective lower bound, that actually only depends on the degree of $K$ over $\Q$.

In case $E$ is defined over the rational numbers and has no complex multiplication, Frey \cite{Frey18} has recently effectivized and extended the work \cite{Hab13} of Habegger. To be more specific, in Theorem 1.2 of \cite{Frey18}  she gave an explicit lower bound for the height on $K(E_\tor)^*\setminus \mu_\infty$ that depends on a prime number that is supersingular and surjective for $E$ and large enough with respect to the degree of the Galois closure of $K$ over $\Q$. Explicit bounds for a supersingular and surjective prime (but still large enough) for $E$ can be found combining Theorem 2.7 in \cite{Frey17} and Theorem 5.2 of \cite{LF16}.

We can then formulate the following result.

\begin{corollary}
	Let $E$ be an elliptic curve defined over $\Q$ or an elliptic curve with complex multiplication. There is an effectively computable constant $C=C(E)$ such that, if $(x,y)$ is a torsion point of $E$ such that $x$ and $y$ are multiplicatively dependent, then $(x,y)$ has order at most $C$.
\end{corollary}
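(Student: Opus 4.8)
The plan is to derive the final corollary directly from Theorem~\ref{effthm} applied with $n=2$, $f_1 = X$ and $f_2 = Y$, by verifying in each of the two listed cases that the hypothesis of Theorem~\ref{effthm} holds with an \emph{effectively computable} $\epsilon$ depending only on $E$. Indeed, $X$ and $Y$ are non-zero rational functions in $K(X,Y)$ that are multiplicatively independent (no monomial $X^aY^b$ with $(a,b)\neq(0,0)$ is identically $1$ on the curve, since $X$ and $Y$ are algebraically independent of each other only up to the Weierstrass relation, but multiplicative independence is immediate from degree considerations). So once we have the effective Bogomolov constant, Theorem~\ref{effthm} produces an effectively computable $C = C(E, X, Y, \epsilon)$ bounding the order of any torsion point $(x,y)$ with $x,y$ multiplicatively dependent, and since $\epsilon$ itself is effective in terms of $E$ we absorb it to get $C = C(E)$.

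For the CM case, first I would recall that if $E$ has complex multiplication then $K(E_\tor)/K$ is an abelian extension (after possibly enlarging $K$ to contain the CM field, which only changes constants effectively). By the theorem of Amoroso and Zannier~\cite{AZ10}, every element of an abelian extension of $\Q$ of bounded degree that is not a root of unity has height bounded below by an effectively computable positive constant depending only on $[\Q(\text{that element}):\Q]$; since $K(E_\tor)$ is abelian over $\Q$ when $E/\Q$ has CM (or over the appropriate base field in general), this yields the required $\epsilon$ explicitly. For the non-CM case with $E$ defined over $\Q$, I would invoke Frey's Theorem~1.2 in \cite{Frey18}, which gives an explicit lower bound for the height on $K(E_\tor)^*\setminus\mu_\infty$ in terms of a supersingular surjective prime $p$ for $E$ that is large enough relative to the degree of the Galois closure of $K/\Q$; combining this with the explicit bounds on such a prime coming from Theorem~2.7 of \cite{Frey17} and Theorem~5.2 of \cite{LF16}, the constant $\epsilon$ becomes effectively computable purely in terms of $E$.

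The last step is simply to observe that the two cases cover exactly the hypothesis of the corollary: either $E$ has complex multiplication (case one, no restriction on the field of definition beyond it being a number field), or $E$ is defined over $\Q$ (in which case, if it also has CM we are in case one, and otherwise case two applies). In both situations we obtain an effective $\epsilon = \epsilon(E) > 0$, feed it into Theorem~\ref{effthm}, and conclude.

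I do not expect a serious obstacle here, as this corollary is essentially a bookkeeping combination of Theorem~\ref{effthm} with two external effective Bogomolov-type results. The only mild subtlety to check carefully is that enlarging the base field $K$ (to contain the CM field in the CM case, or to take a Galois closure in Frey's setup) does not damage effectivity: one must make sure that $K(E_\tor)$ remains the field over which the relevant height bound is stated and that the degree-dependent constants are genuinely computable. A second point worth stating explicitly is the verification that $f_1 = X$ and $f_2 = Y$ are multiplicatively independent as elements of $K(X,Y)$ modulo the Weierstrass relation, so that Theorem~\ref{effthm} indeed applies; this is routine but should be noted.
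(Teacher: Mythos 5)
Your proposal is correct and matches the paper's argument: the corollary is obtained by feeding the two effective Bogomolov-type bounds (Amoroso--Zannier in the CM case, Frey combined with Le Fourn's and Frey's supersingular-surjective-prime bounds in the non-CM case over $\Q$) into Theorem \ref{effthm} with $f_1=X$, $f_2=Y$. The paper treats this exactly as the bookkeeping step you describe, so there is nothing further to add.
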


\section{Proof of Theorem \ref{mainthm}} \label{sketch}

As mentioned in the Introduction, the proof of Theorem \ref{mainthm} can be obtained by adapting the proof of Theorem \ref{ThmBC2017} (Theorem 1.2 in \cite{BC2017}) to this setting. In particular, one follows the general strategy introduced by Pila and Zannier in \cite{PilaZannier} using the theory of o-minimal structures to give an alternative proof of the Manin-Mumford conjecture for abelian varieties.
An important ingredient of the proof is the now well-known Pila-Wilkie Theorem \cite{PilaWilkie}, which provides an estimate for the number of rational points on a ``sufficiently transcendental'' real subanalytic variety. These rational points correspond to torsion points. For more details about the general strategy and how it has been applied to other problems we refer to \cite{Zannier}.

On the other hand, if one wants to deal with points lying in proper algebraic subgroups like in Theorem \ref{mainthm}, a more refined result is needed. For instance, first in \cite{BC2016} and then in \cite{BC2017}, ideas introduced in \cite{CMPZ} were adapted to deal with linear relations rather than just with the torsion points.

We now turn to the proof of Theorem \ref{mainthm}.

We call $\cC_0$ the set of points of $\cC$ we want to prove to be finite.
First, we notice that the points in $\cC_0$ must be algebraic and, since their projection on $E$ is a torsion point, they have bounded height. We then just have to exhibit a bound on their degree over the number field $K$.

\begin{lemma} \label{lem:Galois}
	There exists a compact (in the complex topology) subset $\cC^*$ of $\cC$, such that for all $c\in \cC_0$ of degree large enough, 
	at least half of the Galois conjugates of $c$ over $K$ lies in $\cC^*$
\end{lemma}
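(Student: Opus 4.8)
\textbf{Proof proposal for Lemma \ref{lem:Galois}.}
The plan is to take for $\cC^*$ the intersection of $\cC(\C)$ with a fixed compact polycylinder in the $\Gm^n$-directions. Since the chosen model of $E$ lives in $\mathbb P^2$, the set $E(\C)$ is compact, so the only source of non-compactness of $\cC(\C)$ inside $E(\C)\times\Gm(\C)^n$ is that some coordinate $Z_j$ of a point may approach $0$ or $\infty$. It therefore suffices, for $c\in\cC_0$, to bound the number of its Galois conjugates over $K$ whose $j$-th coordinate is very large or very small, and this will be a routine height-counting argument. I would take as given the uniform height bound on $\cC_0$ recorded above (the $E$-coordinate being torsion contributes a bound depending only on $E$, while boundedness of the $\Gm^n$-coordinates comes from the theorems of Bombieri--Masser--Zannier and, in the constant case, Viada, using that $Z_1,\dots,Z_n$ are multiplicatively independent on $\cC$). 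Fix a constant $B$ with $h(c)\le B$ for all $c\in\cC_0$; then $h(Z_j(c))\le B$, and since $Z_j(c)\ne 0$ also $h(Z_j(c)^{-1})=h(Z_j(c))\le B$, for every $j$.

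Next I would write $d=[K:\Q]$, set $\delta=e^{-8ndB}$, and define
\[
\cC^*=\bigl\{\,c'\in\cC(\C)\ :\ \delta\le |Z_j(c')|\le \delta^{-1}\ \text{ for }j=1,\dots,n\,\bigr\}.
\]
Since $\cC(\C)$ is closed in $E(\C)\times\Gm(\C)^n$, and the displayed inequalities cut out a compact subset of $\Gm(\C)^n$ while $E(\C)$ is compact, the set $\cC^*$ is a compact subset of $\cC$, and it depends only on the fixed data $E$, $K$, $n$ and the $Z_j$'s (through $B$).

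Now fix $c\in\cC_0$ and put $D=[K(c):K]$, so that $c$ has exactly $D$ conjugates over $K$ (we are in characteristic $0$), each again a point of $\cC(\C)$ because $\cC$ and the $Z_j$ are defined over $K$, and with $Z_j(\sigma(c))=\sigma(Z_j(c))$. From the definition of the absolute logarithmic Weil height, the archimedean places contribute $\tfrac{1}{[K(c):\Q]}\sum_{\sigma}\log^{+}|\sigma(Z_j(c))|$ to $h(Z_j(c))$, the sum running over the embeddings $\sigma\colon K(c)\hookrightarrow\C$; as the finite places contribute non-negatively, $\sum_{\sigma}\log^{+}|\sigma(Z_j(c))|\le [K(c):\Q]\,B=dDB$. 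Hence at most $dDB/(8ndB)=D/(8n)$ conjugates satisfy $|Z_j(\sigma(c))|>\delta^{-1}$, and, applying the same estimate to $Z_j(c)^{-1}$, at most $D/(8n)$ of them satisfy $|Z_j(\sigma(c))|<\delta$. Summing over $j=1,\dots,n$, at most $D/4$ conjugates of $c$ fail to lie in $\cC^*$, so at least $\tfrac34 D\ge\tfrac12 D$ of them lie in $\cC^*$, which proves the lemma (the hypothesis that $\deg c$ be large is not even needed). The only non-elementary ingredient of the argument is the uniform height bound $B$, which is exactly where Bombieri--Masser--Zannier and Viada are invoked; the rest is the standard pigeonhole on archimedean absolute values.
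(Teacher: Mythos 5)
Your argument is correct. The paper gives no proof of this lemma at all---it simply cites Lemma 8.2 of Masser--Zannier [MZ14a]---and what you have written is essentially the standard argument behind that citation: since $E(\C)$ is compact, the only way to escape a compact subset of $\cC(\C)$ is for some $Z_j$ to approach $0$ or $\infty$, and the uniform height bound on $\cC_0$ together with the archimedean lower bound for the height, $h(\alpha)\ge \frac{1}{[K(c):\Q]}\sum_{\sigma}\log^{+}|\sigma(\alpha)|$, shows that only a proportion $\le 1/4$ of the conjugates can do so. Your pigeonhole with $\delta=e^{-8ndB}$ is accurate (applied to both $Z_j$ and $Z_j^{-1}$, summed over $j$), and the compactness of $\cC^*$ is fine since $\cC(\C)$ is closed in $E(\C)\times\Gm(\C)^n$. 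The one point worth correcting is your attribution of the height bound for the $Z_j$-coordinates to Bombieri--Masser--Zannier and Viada: those bounded-height theorems require the curve not to be contained in a \emph{translate} of a proper algebraic subgroup, which is not among the hypotheses of Theorem 1.1 (only multiplicative independence of the $Z_j$ is assumed), so they are not directly applicable here. The bound the paper actually records, and the one you should use, comes entirely from the elliptic side: $(X(c),Y(c))$ torsion forces $h(X(c))$ to be bounded by a constant depending only on $E$, and since $X$ (or $Y$) is non-constant on the curve $\cC$ whenever $\cC_0\neq\emptyset$, the comparability of heights attached to non-constant functions on a curve bounds $h(c)$ and hence every $h(Z_j(c))$. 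Since you explicitly take the height bound as given from the paragraph preceding the lemma, this misattribution does not affect the validity of your proof.
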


\begin{proof}
	See Lemma 8.2 of \cite{MZ14a}.
\end{proof}

Note that, if $c\in \cC_0$, then all its Galois conjugates over $K$ must also lie in $\cC_0$.

We now cover $\cC^*$ with finitely many discs $D_1, \dots , D_{\gamma_1}$.

Let $D$ be one of these discs. For $a\in \Z \setminus \{0\}$ and $\ol{b}=(b_1,\dots , b_n )\in \Z^n \setminus \{0\}$ we call $D(a,\ol{b})$ the set of $c\in D$ such that $(X(c),Y(c))$ has order dividing $a$ and $\prod Z_j(c)^{b_j}=1$.

For the rest of the section the implied constants will depend on $\cC$, $E$ and $K$. Any further dependence will be expressed by an index.

\begin{lemma}
If $c\in D \cap \cC_0$, there are $a\in \Z \setminus \{0\}$ and $\ol{b}\in \Z^n \setminus \{0\}$ such that $c \in D(a,\ol{b})$ and
\begin{equation}   \label{eq:abc}
\max \{|a|,|\ol{b}| \} \ll [K(c):K]^{\gamma_2}
\end{equation}
for some fixed $\gamma_2>0$, where $|\ol{b}| = \max \{|b_1| , \dots , |b_n|\}$.
\end{lemma}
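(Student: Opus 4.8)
We want to show that any $c \in D \cap \cC_0$ lies in some $D(a,\bar b)$ with $\max\{|a|,|\bar b|\}$ polynomially bounded in $[K(c):K]$. I would produce $a$ and $\bar b$ by two independent counting/pigeonhole arguments, one on the elliptic side and one on the torus side, and then combine.

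On the elliptic side, since $(X(c),Y(c))$ is a torsion point, its exact order $N$ divides some minimal $a$ and it is classical (via the theory of the Weil pairing, or simply because the $N$-division field has degree at least $c\, N^{2}/\log\log N$ or even just $\gg_\varepsilon N^{2-\varepsilon}$ over $K$, and in any case $\gg N$) that $N \ll [K(X(c),Y(c)):K]^{\gamma}\le [K(c):K]^{\gamma}$ for an absolute $\gamma$; take $a = N$. On the torus side, for each conjugate we have $Z_1(c)^{b_1}\cdots Z_n(c)^{b_n}=1$ for some nonzero $\bar b$, but a priori with no control on $|\bar b|$. To bound it, I would invoke the standard lemma (going back to Masser, used throughout this circle of ideas, e.g. in \cite{BMZ99} and \cite{BC2017}) that if $z_1,\dots,z_n$ are multiplicatively dependent algebraic numbers of degree $\le D$ and (logarithmic) height $\le h$, then there is a relation with $|\bar b| \ll (D\,h)^{n-1}$ — i.e., the lattice of relations is generated by short vectors. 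Here $D \le [K(c):K]$ times a constant (the $Z_j$ are fixed rational functions of the coordinates, hence of bounded degree over $K(X(c),Y(c))$), and the height $h$ of the $Z_j(c)$ is $\ll 1$: the coordinates $X(c),Y(c)$ of a torsion point have height bounded by an absolute constant depending only on $E$ (as remarked in the Introduction right before Theorem \ref{effthm}), so applying the fixed rational functions $f_j = Z_j$ keeps the height bounded in terms of $E$, $\cC$, $K$ only. Hence $|\bar b| \ll [K(c):K]^{n-1}$.

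Putting these together, set $\gamma_2$ to be the larger of $\gamma$ and $n-1$; then $c \in D(a,\bar b)$ with $\max\{|a|,|\bar b|\} \ll [K(c):K]^{\gamma_2}$, which is \eqref{eq:abc}. The only subtlety is to make sure the degree and height inputs are uniform: the degree of $K(Z_j(c))$ over $K$ is at most a fixed constant times $[K(X(c),Y(c)):K]$ because each $Z_j$ is a fixed rational function, and the height bound for torsion coordinates is genuinely uniform over all torsion points of a fixed $E$ (an absolute constant $\ll 1$ in our convention), so the right-hand side of the multiplicative-dependence lemma is a fixed power of $[K(c):K]$.

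**Main obstacle.** The one place that needs a little care rather than a black box is the torus-side step: one must quote (or reprove) the quantitative statement that a multiplicative dependence among bounded-height, bounded-degree algebraic numbers can be realized by a short relation vector, with the exponent of the bound made explicit in $n$, $D$, $h$; this is where the polynomial dependence on $[K(c):K]$ really comes from, and it is exactly the lemma used in \cite{BC2017}. The elliptic-side bound $N \ll [K(c):K]^{\gamma}$ is comparatively routine. So I expect the proof to be short, essentially citing these two standard ingredients and checking the uniformity of the height and degree inputs.
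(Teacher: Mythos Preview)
Your proposal is correct and follows essentially the same approach as the paper: bound $|a|$ on the elliptic side by David's result on orders of torsion points versus degree, and bound $|\ol{b}|$ on the torus side by Masser's Theorem~$\Gm$, using that the $Z_j(c)$ have bounded height since $c$ does. One small imprecision: the version of Masser's lemma in the paper is $|\ol{b}|\leq n^{n-1}\omega(h/\eta)^{n-1}$, so the polynomial dependence on $[K(c):K]$ enters through \emph{two} quantities --- the number $\omega$ of roots of unity in $K(c)$ and, crucially, the height lower bound $\eta$ on $K(c)^*\setminus\mu_\infty$ --- and it is the latter that forces one to invoke Blanksby--Montgomery (or Dobrowolski); your stated bound $(Dh)^{n-1}$ suppresses this step.
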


\begin{proof}
We proceed as done  in Lemmas 5.1 and 5.2 of \cite{BC2017}.

First, one can use the work of David \cite{David97} to bound the order of a torsion point of $E$ in terms of the degree of a field of definition.

By Theorem $\mathbb{G}_m$ of Masser \cite{Masser88} we take $\ol{b}$ to be such that 
$$
|\ol{b}|\leq n^{n-1} \omega \left( \frac{h}{\eta}  \right)^{n-1},
$$
where $\omega$ is the number of roots of unity in $K(c)$, which is clearly polynomially bounded in terms of $[K(c):\Q]$, $h$ is an upper bound for the height of the values $Z_j(c)$, which is again bounded because $h(c)$ is bounded, and $\eta$ is a lower bound on the height for elements of $K(c)^*\setminus \mu_\infty$. For the latter, we could use the celebrated result of Dobrowolski but the weaker and older work of Blanksby and Montgomery \cite{BlaMon} suffices here.
\end{proof}

We consider an elliptic logarithm $u$ of $(X,Y)$ and the principal determinations of the standard logarithms $w_1, \dots , w_n$ of $Z_1, \dots , Z_n$ seen as analytic functions on (an open neighbourhood of) $D$ and the equations
$$
u=p+q \tau,  \quad   w_j=r_j + 2\pi i  s_j , \ \  \mbox{for $j=1,\dots , n$,}
$$
where $(1,\tau)$ is a basis of the period lattice of $E$. If we consider the real coordinates $p, q, r_j, s_j$ as a function
$$
\begin{array}{crcl}
\theta :& D\subset \R^2 & \rightarrow & \R^{2+2n} \\
& c &\mapsto & (p(c), q(c), r_1(c), s_1(c),\dots , r_n(c),s_n(c))
\end{array}
$$
of a local uniformizer on the compact disc $D$ seen as a subset of $\R^2$, the image $\theta(D)$ is a subanalytic surface $S$. Note that $\theta$ is injective. Moreover,  note that, as $D$ is compact, we have that $p,q$ and the $s_j$ take bounded values.
The $p, q, r_1, s_1,\dots , r_n,s_n$ are sometimes called Betti-coordinates and $\theta$ Betti-map.

The points of $\cC_0$ that yield two relations will correspond to points of $S$ lying on linear varieties defined by equations of some special form and with integer coefficients.
In particular, if  $c\in D(a,\ol{b})$, there are integers $e,f,g$ such that
$$
\begin{array}{c}
ap=e, \\
aq=f, \\
b_1r_1+ \dots +b_n r_n =0, \\
b_1s_1+ \dots +b_n s_n =g,
\end{array}
$$
hold for the image $\theta(c)$.

We define
\begin{multline*}
W=\{(\alpha,\beta_1, \dots , \beta_n, \sigma_1, \sigma_2, \sigma_3, p, q, r_1, s_1,\dots , r_n,s_n )\in \R^{n+1}\times \R^3\times S: \\ \alpha p= \sigma_1,  \alpha q= \sigma_2, \sum_{j=1}^n \beta_j r_j=0,  \sum_{j=1}^n \beta_j s_j=\sigma_3  \},
\end{multline*}
and, for $\alpha,\beta_1, \dots , \beta_n \in \R$ and $\ol{\beta}=(\beta_1, \dots , \beta_n)$, the fiber
\begin{multline*}
W_{\alpha,\ol{\beta}}=\{ (\sigma_1, \sigma_2, \sigma_3, p, q, r_1, s_1,\dots , r_n,s_n )\in  \R^3\times S:  \\(\alpha,\beta_1, \dots , \beta_n, \sigma_1, \sigma_2, \sigma_3, p, q, r_1, s_1,\dots , r_n,s_n  ) \in W \}.
\end{multline*}
We let $\pi_1$ be the projection from $\R^3\times S \subseteq \R^3 \times \R^{2n+2}$ to $\R_3$, while $\pi_2$ indicates the projection to $S$. We also define, for $T\geq 0$
\begin{multline*}
W_{\alpha,\ol{\beta}}^\sim (\Q, T)= \{ (\sigma_1, \sigma_2, \sigma_3, p, q, r_1, s_1,\dots , r_n,s_n  ) \in W_{\alpha,\ol{\beta}}: \\ (\sigma_1,\sigma_2,\sigma_3) \in \Q^3 \text{ and } H(\sigma_1,\sigma_2,\sigma_3)\leq T   \},
\end{multline*}
where $H(\sigma_1,\sigma_2,\sigma_3)$ is the maximum of the absolute values of the numerators and denominators of the $\sigma_j$ when they are written in lowest terms.

Fix now $a$ and $\ol{b}$.
Note that, if $c\in D(a, \ol{b})$, there are integers $e,f,g$ such that $(e,f,g, \theta (c))\in W_{a,\ol{b}}$. Since $p,q,s_1, \dots, s_n$ take bounded values as $D$ is a compact disc, we can suppose that 
$$
\max \{|e|,|f|,|g|,|a|,|\ol{b}| \}\leq T_0
$$ 
for some $T_0$ with $T_0 \ll \max \{|a|,|\ol{b}| \} $. 
Therefore, if we let 
$$
\Sigma_{a,\ol{b}} :=\pi_2^{-1} (\theta(D(a,\ol{b})))\cap W_{a,\ol{b}},
$$
we have $\Sigma_{a,\ol{b}}  \subseteq   W_{a,\ol{b}}^\sim (\Q, T_0) $.

We claim that, for any $\epsilon>0$, we have an upper bound of the form 
\begin{equation}   \label{eq:ab}
|D(a,\ol{b})|\ll_\epsilon (\max \{|a|,|\ol{b}|\})^\epsilon.
\end{equation}
If not, by the previous considerations the following lemma would be contradicted.

\begin{lemma}
	For any $\epsilon>0$ we have $|\pi_2(\Sigma_{a,\ol{b}})|\ll_\epsilon T_0^\epsilon$. 
\end{lemma}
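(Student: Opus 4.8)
The plan is to bound $|\pi_2(\Sigma_{a,\ol{b}})|$ by applying a Pila--Wilkie type counting theorem to the fibers $W_{a,\ol{b}}$ of the definable family $W$, and then to convert a polynomial lower bound on point-counts into an algebraic locus contained in $\cC$, which by the hypotheses of Theorem \ref{mainthm} would be impossible. First I would observe that $W$ is a subanalytic (hence definable in the o-minimal structure $\R_{\mathrm{an}}$) family, since $S = \theta(D)$ is subanalytic and the defining conditions on $W$ are algebraic in the parameters $(\alpha,\ol{\beta})$ and the coordinates. Then, by the Pila--Wilkie theorem applied uniformly in the family (the relevant uniform version is in \cite{PilaWilkie}, and it is the form used in \cite{BC2017}), for every $\epsilon>0$ there is a constant $c(\epsilon)$ such that the number of rational points of height at most $T_0$ on the \emph{transcendental part} of each fiber $W_{a,\ol{b}}$ is at most $c(\epsilon) T_0^\epsilon$, with $c(\epsilon)$ independent of $(a,\ol{b})$. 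So it suffices to show that the relevant points of $\Sigma_{a,\ol{b}}$, namely the images $\theta(c)$ together with their associated integer vectors $(e,f,g)$, avoid the algebraic part of $W_{a,\ol{b}}$ — equivalently, that any positive-dimensional semialgebraic subset of $W_{a,\ol{b}}$ pulls back, via $\pi_2$ and $\theta^{-1}$, to a proper subset of $D$ (in fact we want it to contribute nothing, or only $O(1)$ points, after projecting to $D$).

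The heart of the argument is therefore the following: suppose a connected positive-dimensional semialgebraic set $B \subseteq W_{a,\ol{b}}$ has $\pi_2(B)$ of positive dimension. I would pull $B$ back through $\theta$ to a real-analytic arc in $D$, complexify, and use the equations $ap = e$, $aq = f$, $\sum \beta_j r_j = 0$, $\sum \beta_j s_j = \sigma_3$ together with the relations $u = p + q\tau$, $w_j = r_j + 2\pi i s_j$ to produce a nontrivial algebraic relation. Concretely, constancy of $\sigma_1,\sigma_2$ (which are locally constant integers on the relevant locus, but here I would rather use the algebraic-part structure) forces $ap$ and $aq$ to be affine in the semialgebraic parameter, hence $u = p + q\tau$ is affine along the arc, which by Ax--Schanuel / André's theorem on the bi-algebraic structure of the elliptic exponential (as used in \cite{BC2017}, following \cite{CMPZ}) forces the corresponding branch of $\cC$ to lie in a coset of an elliptic subgroup — but here the $E$-factor is one-dimensional, so this says $(X,Y)$ is constant or torsion along that branch; similarly $\sum \beta_j w_j$ constant forces $\prod Z_j^{\beta_j}$ constant along the branch, i.e. a multiplicative relation among the $Z_j$. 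Combining, a positive-dimensional algebraic piece of $W_{a,\ol{b}}$ surjecting onto a curve in $S$ would force $\cC$ itself to be contained in a set of the form $\{(X,Y) \text{ torsion}\} \times \{\prod Z_j^{\beta_j} = \text{const}\}$, contradicting either that $(X,Y)$ is non-torsion on $\cC$ or that $Z_1,\dots,Z_n$ are multiplicatively independent.

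I would organize the write-up as: (i) check definability of the family $W$ in $\R_{\mathrm{an}}$; (ii) invoke the uniform Pila--Wilkie estimate to get $c(\epsilon)T_0^\epsilon$ rational points of bounded height on the transcendental part, uniformly in $(a,\ol{b})$; (iii) show that the algebraic part of $W_{a,\ol{b}}$ meets $\Sigma_{a,\ol{b}}$ in a set whose $\pi_2$-image, transported to $D$, is contained in a fixed finite union of proper real-analytic subsets — this is where the functional-transcendence input enters — so that it contributes $O(1)$; (iv) conclude $|\pi_2(\Sigma_{a,\ol{b}})| \ll_\epsilon T_0^\epsilon + O(1) \ll_\epsilon T_0^\epsilon$. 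The main obstacle is step (iii): controlling the algebraic part of the fibers uniformly. One has to rule out that for infinitely many $(a,\ol{b})$ a new piece of algebraic locus appears that projects onto a curve in $S$; the clean way is the observation (as in \cite{BC2017}, and ultimately \cite{CMPZ}) that the union over all $(\alpha,\ol{\beta}) \in \R^{n+1}$ of the algebraic parts of the fibers $W_{\alpha,\ol{\beta}}$ is itself a definable set, whose intersection with $S$ has dimension $\le 1$ unless $\cC$ satisfies one of the excluded degeneracies; then a fiber dimension argument shows all but finitely many fibers meet $S$ in a finite set. Making this ``block family'' argument precise — i.e. that the blocks of the semialgebraic pieces of the fibers vary definably and cannot generically sweep out a surface in $S$ — is the delicate part, and is exactly the point at which the hypotheses ``$(X,Y)$ not torsion'' and ``$Z_j$ multiplicatively independent'' are consumed.
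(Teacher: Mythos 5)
There is a genuine gap, and it sits exactly where you put your step (ii). The points of $\Sigma_{a,\ol{b}}$ are \emph{not} rational points of the definable set $W_{a,\ol{b}}$: only the three coordinates $(\sigma_1,\sigma_2,\sigma_3)$ are rational (indeed integral and of height $\leq T_0$), while the remaining coordinates $(p,q,r_1,s_1,\dots,r_n,s_n)\in S$ are arbitrary real numbers, and the quantity to be bounded is the number of distinct points of $S$ that admit \emph{some} such rational companion. The classical Pila--Wilkie theorem, even in its uniform family version, counts rational points of a definable set and therefore does not apply to this ``semi-rational'' configuration; nor can you fix it by projecting to the $(\sigma_1,\sigma_2,\sigma_3)$-space, since a single rational triple could a priori lie under many points of $S$. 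This is precisely why the paper invokes Corollary 7.2 of Habegger--Pila rather than Pila--Wilkie (the introduction flags this explicitly: counting torsion alone could be done with effective Pila--Wilkie, but the multiplicative condition forces the semi-rational count). That corollary converts the hypothesis $|\pi_2(\Sigma_{a,\ol{b}})|\geq \gamma_3 T_0^\epsilon$ directly into a definable path $\delta:[0,1]\to W_{a,\ol{b}}$ with $\pi_1\circ\delta$ semi-algebraic, $\pi_2\circ\delta$ non-constant, and $\pi_2(\delta(0))\in\pi_2(\Sigma_{a,\ol{b}})$; there is no separate ``transcendental part / algebraic part'' decomposition and no block-family uniformity issue to resolve, because the Habegger--Pila statement is already uniform in the fiber.

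Your step (iii) also would not close the argument as stated: showing that the $\pi_2$-image of the algebraic part is contained in finitely many proper real-analytic subsets of $S$ does not give an $O(1)$ contribution, since such subsets could still contain unboundedly many points of $\pi_2(\Sigma_{a,\ol{b}})$. By contrast, once one has the Habegger--Pila path, the functional-transcendence step you sketch is essentially the paper's: push $\delta$ forward by $\phi(X,Y,Z_1,\dots,Z_n)=(X,Y,\prod_j Z_j^{b_j})$, note that the relations $au=\sigma_1+\sigma_2\tau$ and $\sum_j b_jw_j=2\pi i\sigma_3$ together with $\mathrm{trdeg}_\C\,\C(\sigma_1,\sigma_2,\sigma_3)\leq 1$ force $\mathrm{trdeg}_\C\,\C(u,w')\leq 1$, and apply Ax's theorem to conclude that $\phi(\cC)$ is a torsion coset (torsion because $\phi(c_0)$ is a torsion point, a curve because $u$ is non-constant), contradicting the hypotheses of Theorem \ref{mainthm}. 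One small correction to your sketch: along the path $\sigma_1,\sigma_2$ need not be constant, only algebraically dependent, so $u$ is not ``affine'' but merely lies, together with $w'$, on an algebraic curve; the conclusion via Ax is the same.
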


\begin{proof}
	Suppose there is a positive constant $\gamma_3=\gamma_3(W,\epsilon)$ such that $|\pi_2(\Sigma_{a,\ol{b}})|\geq \gamma_3 T_0^\epsilon$. Then, by Corollary 7.2 of \cite{HabPila16}, there exists a definable function $\delta:[0,1]\rightarrow W_{a,\ol{b}}$ such that
	\begin{enumerate}
		\item the map $\delta_1:=\pi_1 \circ \delta :[0,1]\rightarrow \R^3$ is semi-algebraic and its restriction to $(0,1)$ is real analytic;
		\item the composition $\delta_2:=\pi_2 \circ \delta :[0,1]\rightarrow S$ is non-constant;
		\item we have $\pi_2(\delta(0)) \in \pi_2(\Sigma_{a,\ol{b}})$.
	\end{enumerate}
	
	By rescaling and restricting the domain we can suppose that the path $\delta_1 $ is contained in a real algebraic curve. Moreover, by (3) above, there is a $c_0 \in D(a,\ol{b})$ with $\theta (c_0)=\delta_2(0)$.
	
	We now consider the map
	$$
	\begin{array}{clcl}
	\phi :& E\times \Gm^n & \rightarrow & E\times\Gm \\
	& (X,Y,Z_1, \dots , Z_n) &\mapsto & (X,Y,\prod_{j=1}^n Z_j^{b_j})
	\end{array}
	$$
	and its differential
	$$
	\begin{array}{clcl}
	d \phi :& \C\times \C^n & \rightarrow & \C\times\C \\
	& (u,w_1, \dots , w_n) &\mapsto & (u,w':=\sum_{j=1}^n b_j w_j). 
	\end{array}
	$$
	We now see $\sigma_1,\sigma_2, \sigma_3, u, w'$ as coordinate functions on $[0,1]$. 
	We have that the transcendence degree trdeg$_\C \, \C(\sigma_1,\sigma_2,\sigma_3)\leq 1$ and recall the two relations $au=\sigma_1+\sigma_2 \tau$ and $w'= 2\pi i \sigma_3$. We deduce that trdeg$_\C \, \C(\sigma_1,\sigma_2,\sigma_3,u,w')\leq 1$. This gives a map $\delta':=(u,w'): [0,1]\rightarrow \C \times \C$ that is real semi-algebraic, continuous and with $\delta'  |_{(0,1)}$ real analytic. By Ax's Theorem \cite{Ax72} (see \cite[Theorem 5.4]{HabPila16}), the Zariski closure in $E\times\Gm$ of the image of $\exp \circ \delta'$, which is contained in $\phi(\cC)$, is a coset, that must actually be a torsion coset because $\phi (c_0)$ is a torsion point of $E\times\Gm$. Now, since $u$ is non-constant, this coset must be a curve which then coincides with $\phi(\cC)$. This contradicts the hypotheses of Theorem~\ref{mainthm}.
\end{proof}

Finally, for a $c\in \cC_0$ of large degree over $K$, by Lemma~\ref{lem:Galois}  we have that one of the discs $D_1,\dots, D_{\gamma_1}$, say $D_1$, 
contains at least $ [K(c):K]/(2\gamma_1)$ conjugates of $c$. Moreover, if $c\in D_1(a,\ol{b})$ for some $a$ and $\ol{b}$, all of these conjugates belong to $D_1(a,\ol{b})$. Therefore, combining this with \eqref{eq:abc} and \eqref{eq:ab}, we get
$$
[K(c):K] \ll |D_1(a,\ol{b})|\ll_\epsilon (\max \{|a|,|\ol{b}|\})^\epsilon \ll [K(c):K]^{\gamma_2 \epsilon}, 
$$
which, after choosing $\epsilon < 1/(2\gamma_2)  $, leads to a contradiction if $[K(c):K]$ is too large.
This completes the proof of Theorem~\ref{mainthm}.

\section{Proof of Theorem \ref{effthm}}

We start by introducing a couple of results that we are going to use in proving Theorem \ref{effthm}.
The first is a results of Masser \cite{Masser88}.

Let $F$ be a real-valued non-constant function on $\Z^n$ that satisfies the following conditions:

\begin{enumerate}
	\item  $F(\oa)\geq 0$ for all $\oa \in \Z^n$;
	\item $F(\lambda \oa)=|\lambda|F(\oa)$ for all $\oa \in \Z^n$ and all $\lambda \in \Z$;
	\item $F(\oa_1+\oa_2)\leq F(\oa_1)+ F(\oa_2)$ for all $\oa_1, \oa_2 \in \Z^n$.
 \end{enumerate}
The set $\Gamma=\{\oa \in \Z^n : F(\oa)=0 \}$ is a lattice in $\R^n$.

\begin{proposition}[\cite{Masser88}, Proposition on p. 250]\label{propMasser}
	Let $F$ and $\Gamma$ be as above. Suppose there exists some real number $B$ such that $F(e_i)\leq B$, where the $e_i$ are the elements of the standard basis of $\R^n$. Suppose moreover that there is a positive real number $\epsilon$ such that $F(\oa)\geq \epsilon$ for all $\oa \not \in \Gamma $. Then, $\Gamma$ contains a non-zero element $\oa$ with
	$$
	|\oa| \leq  (n B/\epsilon)^{n-1},
	$$
where $|\ol{a}|$ is the maximal absolute value of the coordinates of $\ol{a}$.
\end{proposition}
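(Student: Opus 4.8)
The plan is to recognise $F$ as the restriction to $\Z^n$ of a seminorm on $\R^n$ and to read off the two quantitative hypotheses as statements about successive minima, so that the bound falls out of Minkowski's convex body theorems. (When $\Gamma=\{0\}$ there is no element $\bar a$ as in the conclusion, so the proposition is applied --- as it is here and in \cite{Masser88} --- only when $\Gamma\neq\{0\}$; I assume this.)

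First I would extend $F$. By (ii) and (iii) it is $\Z$-homogeneous and subadditive, so $\tilde F(\bar a/m):=F(\bar a)/m$ defines a $\Q$-homogeneous subadditive function on $\Q^n$; since $\tilde F(x)\le\sum_i|x_i|F(e_i)\le nB\|x\|_\infty$ it is $\|\cdot\|_\infty$-Lipschitz and extends uniquely to a seminorm $\tilde F$ on $\R^n$ with $\tilde F\le nB\|\cdot\|_\infty$. Put $V=\ker\tilde F$ and $r=\dim V$, so that $\Gamma=\Z^n\cap V$ and $\operatorname{rank}\Gamma\le r$. The first key point is that $V$ must be a \emph{rational} subspace and $\operatorname{rank}\Gamma=r$: if $V$ were not spanned by $\Z^n\cap V$, then $\Z^n$ would contain points $\bar a\notin V$ with $\operatorname{dist}_\infty(\bar a,V)$ arbitrarily small, whence $\tilde F(\bar a)\le nB\operatorname{dist}_\infty(\bar a,V)<\epsilon$ with $\bar a\notin\Gamma$, contradicting the lower bound hypothesis. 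Moreover $V\neq\R^n$, since otherwise $\tilde F\equiv0$ and $F$ would be constant; hence $1\le r\le n-1$.

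Next I would pass to the quotient $W:=\R^n/V\cong\R^{\,n-r}$. As $\tilde F$ vanishes on its kernel $V$ and is subadditive, it is constant on cosets of $V$, so it descends to a genuine norm $\bar F$ on $W$ dominated by $nB$ times the quotient sup-norm. Since $\Gamma=\Z^n\cap V$ is saturated in $\Z^n$, the image $\Lambda$ of $\Z^n$ in $W$ is a lattice of full rank $n-r$, and the hypothesis ``$F(\bar a)\ge\epsilon$ for $\bar a\notin\Gamma$'' says exactly that the first minimum of $\Lambda$ with respect to $\bar F$ is at least $\epsilon$. A packing argument (or Minkowski's second theorem) then bounds the covolume of $\Lambda$ below by $\gg(\epsilon/nB)^{\,n-r}$; with compatibly normalised Haar measures one has $\operatorname{covol}(\Gamma)\cdot\operatorname{covol}(\Lambda)=1$, so $\operatorname{covol}(\Gamma)\ll(nB/\epsilon)^{\,n-r}$ in the $r$-dimensional space $V$. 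Minkowski's first theorem applied to $\Gamma\subset V$ then produces a non-zero $\bar a\in\Gamma$ with $|\bar a|\ll\operatorname{covol}(\Gamma)^{1/r}\ll(nB/\epsilon)^{(n-r)/r}$; as $r\ge1$ we have $(n-r)/r\le n-1$, the worst case being $\operatorname{rank}\Gamma=1$, which is the stated bound.

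The main obstacle is bookkeeping rather than ideas: to land on the clean constant $(nB/\epsilon)^{n-1}$ instead of that times a dimensional factor, one must keep track of the distortion between the sup-norm (in which the conclusion is phrased) and the Euclidean norm (convenient for Minkowski's theorems), of the relation between the quotient norm on $W$ and the sup-norm, and of the three compatible measure normalisations on $\R^n$, $V$ and $W$, and arrange the estimates so that these losses cancel; this is what the argument on p.~250 of \cite{Masser88} is organised to do.
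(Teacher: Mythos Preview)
The paper does not prove this proposition at all: it is quoted verbatim from Masser \cite{Masser88} (the ``Proposition on p.~250'') and used as a black box in the proof of Theorem~\ref{effthm}. There is therefore nothing in the paper to compare your argument against.

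That said, your sketch is a sound route to such a statement. Extending $F$ to a seminorm on $\R^n$, showing that its kernel $V$ is a rational subspace (your Dirichlet-type argument via the non-discreteness of $\pi(\Z^n)$ when $\operatorname{rank}\Gamma<\dim V$ is correct), and then playing Minkowski on the quotient lattice $\Lambda=\Z^n/\Gamma$ against Minkowski on $\Gamma$ itself is exactly the shape of Masser's own argument; indeed you already acknowledge this in your last paragraph. You are also right that the only delicate point is the bookkeeping needed to land on the clean constant $(nB/\epsilon)^{n-1}$ rather than that constant times something like $n^{n}$ or $2^{n}$: one has to track the comparison $\bar F\le nB\|\cdot\|_{\infty,\mathrm{quot}}$ and the compatibility of volumes through the exact sequence $0\to V\to\R^n\to W\to 0$, and then observe that the worst exponent $(n-r)/r$ occurs at $r=1$. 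Your assumption $\Gamma\neq\{0\}$ is harmless for the application in this paper, where one applies the proposition precisely when a multiplicative dependence is already known to exist.
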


We now recall an effective version of the Manin-Mumford conjecture for semiabelian varieties due to Hrushovski \cite{Hrushovski01}. We only state a much weaker version.

\begin{theorem}\label{MM}	Let $E$ be an elliptic curve and $\cC$ be a curve in $E\times \Gm$. Suppose $E$ and $\cC$ are defined over a number field $K$ and that $\cC$ is not of the form $\{T\}\times \Gm$ or $E\times \{\zeta\}$ for some $T\in E_\tor$ or $\zeta \in \mu_\infty$. Then, there is an effectively computable constant $N=N(\cC,K)$ such that $\cC$ contains at most $N$ torsion points of $E\times \Gm$.
\end{theorem}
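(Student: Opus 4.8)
The statement is the special case of Hrushovski's effective Manin--Mumford theorem for semiabelian varieties \cite{Hrushovski01} in which the ambient group is $E\times\Gm$: the hypothesis on the shape of $\cC$ says precisely that $\cC$ is not a positive-dimensional torsion coset of $E\times\Gm$ (the only such cosets being $\{T\}\times\Gm$, $E\times\{\zeta\}$ and $E\times\Gm$ itself), so that every torsion point of $\cC$ is counted by Hrushovski's bound. One can also prove it by the Pila--Zannier method of Section~\ref{sketch}, and I sketch this route since it stays within the toolkit of the present paper and is the one that is by now effective in the pure-torsion case.

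First reduce to the case that both projections $\cC\to E$ and $\cC\to\Gm$ are finite: if $\cC\to E$ is constant, then either $\cC=\{T\}\times\Gm$ with $T$ torsion, which is excluded, or the constant value is non-torsion and $\cC$ has no torsion points at all; symmetrically for $\cC\to\Gm$. Now let $x=(T,\zeta)\in\cC(\Qbar)$ be a torsion point, with $T$ of order $k$ and $\zeta$ of order $m$, so that $x$ has order $n=\mathrm{lcm}(k,m)$. Since $[K(\zeta):K]\geq\phi(m)/[K:\Q]$ grows like a fixed positive power of $m$, and by David's effective lower bounds \cite{David97} one has $[K(T):K]\gg_{E,K}k^{\kappa_0}$ for an effective $\kappa_0>0$, while $\max\{k,m\}\geq n^{1/2}$, we obtain an effective lower bound $[K(x):K]\gg_{E,K}n^{\kappa}$ for some effective $\kappa>0$.

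On the geometric side one argues as in Section~\ref{sketch}: there is a fixed compact piece $\cC^{*}\subseteq\cC$ containing at least half of the conjugates over $K$ of any $x$ of sufficiently large degree, covered by finitely many discs $D$, and on each disc one has a Betti map $\theta\colon D\to\R^{4}$, $c\mapsto(p(c),q(c),r(c),s(c))$ defined by $u=p+q\tau$ and $w=r+2\pi i s$, where $u$ is an elliptic logarithm of the $E$-coordinate of $c$ and $w$ the logarithm of its $\Gm$-coordinate. A torsion point of order $n$ lying in $D$ has $r=0$ and $p,q,s\in\Q$ with denominators dividing $n$, hence it maps under $\theta$ to a rational point of height at most $n$ on the subanalytic curve $\Sigma_{D}=\theta(D)\cap\{r=0\}$; this is genuinely a curve and not a surface, since $r\equiv 0$ on $D$ would force $|Z|\equiv 1$ on $\cC$, hence $Z$ constant, which is excluded. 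If $\Sigma_{D}$ were contained in a real-algebraic curve, then by Ax's theorem, exactly as in the last lemma of Section~\ref{sketch}, the Zariski closure in $E\times\Gm$ of the corresponding piece of $\cC$ would be a torsion coset, contradicting the hypothesis. Hence $\Sigma_{D}$ is not semialgebraic, and the Bombieri--Pila estimate for rational points on a fixed real-analytic curve (which is effective; see also the effective Pila--Wilkie bounds of \cite{Bin} and \cite{JT18}, and \cite{PilaWilkie}) bounds the number of rational points of height at most $n$ on $\Sigma_{D}$, and therefore the number of torsion points of order $n$ in $D$, by $\ll_{\epsilon}n^{\epsilon}$ with effective implied constant depending on $\cC$, $E$, $K$. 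Combining with the degree bound as at the end of Section~\ref{sketch}: if $x$ is torsion of order $n$ on $\cC$ then $\gg n^{\kappa}$ of its conjugates lie in one fixed disc, all torsion of order $n$, so $n^{\kappa}\ll_{\epsilon}n^{\epsilon}$; taking $\epsilon<\kappa$ bounds $n$ effectively by some $N_{0}=N_{0}(\cC,K)$. All torsion points of $\cC$ then lie in $(E\times\Gm)[N_{0}]$, a set of $N_{0}^{3}$ elements, which yields an effective bound $N(\cC,K)\leq N_{0}^{3}$.

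The step I expect to be the main obstacle is the transcendence input: proving that $\Sigma_{D}$ is not semialgebraic, i.e. that the geometric exclusion of torsion cosets of $E\times\Gm$ propagates to the Betti picture, together with keeping the constant in the point count effective and explicit in $\cC$, $E$, $K$. Both are standard within the Pila--Zannier framework -- the first is the Ax-theoretic argument already carried out in Section~\ref{sketch}, and for a fixed real-analytic curve the Bombieri--Pila bound is effective -- but this is the part that genuinely rests on transcendence theory rather than elementary estimates, which is why for the purposes of this paper invoking \cite{Hrushovski01} is the economical choice.
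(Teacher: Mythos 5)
Your primary argument---observing that the excluded curves $\{T\}\times\Gm$ and $E\times\{\zeta\}$ are exactly the positive-dimensional torsion cosets of $E\times\Gm$ that are curves, and then invoking Hrushovski's effective Manin--Mumford theorem \cite{Hrushovski01}---is precisely what the paper does: the paper offers no proof beyond this citation, presenting the statement as a weak special case of Hrushovski's result. Your supplementary Pila--Zannier sketch is a reasonable alternative and is consistent with the paper's own remark that effective point-counting (\cite{Bin}, \cite{JT18}) should suffice in the pure-torsion case; the only slip there is at the very end, where torsion points of order at most $N_0$ lie in $(E\times\Gm)[\mathrm{lcm}(1,\dots,N_0)]$ rather than in $(E\times\Gm)[N_0]$, so the final count should be $\sum_{n\le N_0} n^3\le N_0^4$ instead of $N_0^3$.
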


We recall that $E_\tor$ is the set of $\Qbar$-torsion points of $E$.

We start by applying Proposition \ref{propMasser} to our setting. 
Consider a point $(x,y)\in E_\tor$ such that $f_1(x,y), \dots , f_n(x,y)$ are multiplicatively dependent. We are excluding the finitely many $(x,y)$ where some of the $f_i$ vanish. We are going to show that there is a ``small'' vector $\oa$ such that $\prod_i f_i(x,y)^{a_i}$ is a root of unity.
 
 We define  
 \begin{equation*}\begin{array}{clcl}
 F: & \Z^n  &\rightarrow & \R_{\geq 0} \\
 &  (a_1, \dots, a_n) &\mapsto & h(\prod_i f_i(x,y)^{a_i}), 
 \end{array}
 \end{equation*}
 where $h$ denotes the absolute logarithmic Weil height. 
 By the basic properties of heights, $F$ satisfies the conditions that allow us to apply Masser's Proposition \ref{propMasser}. Moreover, 
 $$
 \Gamma=\lg\oa \in \Z^n : F(\oa)=0 \rg=\lg  \oa \in \Z^n: \prod_i f_i(x,y)^{a_i} \in \mu_\infty \rg .
 $$
 
Note that, since $(x,y)$ is a torsion point, we have that, for all $i=1,\dots, n$, $h(f_i(x,y))$ is effectively bounded from above by a constant $B=B(E,f_1,\dots ,f_n)$. Thus, $F(e_i)\leq B$ for the standard basis vectors $e_i$. On the other hand, any product $\prod_i f_i(x,y)^{b_i}$ lies in $K(E_\tor)$ and by our hypothesis we know there exists an effective constant $\epsilon$ such that $h(\prod_i f_i(x,y)^{b_i})\geq \epsilon$ in case $\prod_i f_i(x,y)^{b_i}$ is not a root of unity.

Proposition \ref{propMasser} tells us that, in case the $f_i(x,y)$ are multiplicatively dependent, there is a non-zero $\oa \in \Z^n$ with $|\oa| \leq  (n B/\epsilon)^{n-1}$, such that $\prod_i f_i(x,y)^{a_i} \in \mu_\infty $.

Now, for a fixed $\ol{b}\neq 0$, after possibly removing a divisor from $E$, we can define a morphism $E\rightarrow E\times \Gm $, $(X,Y)\mapsto \left((X,Y),\prod_i f_i(X,Y)^{b_i}\right) $ and let $C_{\ol{b}}$ be its image. This is a curve in $G:=E\times \Gm$.
Since the $f_i$ are not identically multiplicatively dependent, all of these curves, for varying $\ol{b}\neq 0$, satisfy the hypotheses of Theorem \ref{MM}. Therefore, there is an effective bound $N(C_{\ol{b}}, K) $ on the cardinality of $C_{\ol{b}}\cap G_\tor$.

Let us go back to our point $(x,y)\in E_\tor$ such that $f_1(x,y), \dots , f_n(x,y)$ are multiplicatively dependent. This will correspond to a point $((x,y),\zeta) \in C_{\ol{a}}\cap G_\tor$ for some $\oa \in \Z^n \setminus\{0\}$, where, by the above considerations, we can suppose $|\oa| \leq  (n B/\epsilon)^{n-1}$.

Therefore, the finite sum
\begin{equation} \label{eq1}
\sum_{ \substack{ \oa \in \Z^n\setminus \{0\} \\ |\oa| \leq  (n B/\epsilon)^{n-1} } }  N(C_{\ol{a}}, K)=: A(E, f_1, \dots , f_n, \epsilon)
\end{equation}
is an effective bound on the number of $(x,y)\in E_\tor$ such that $f_1(x,y), \dots , f_n(x,y)$ are multiplicatively dependent.

Now, given a point $(x,y)\in E_\tor$, any Galois conjugate $(x^\sigma,y^\sigma)$ of $(x,y)$ over $K$ is again a torsion point such that the $f_i(x^\sigma,y^\sigma)$ are multiplicatively dependent. Therefore, the constant $A(E, f_1, \dots , f_n, \epsilon)$ defined in \eqref{eq1} gives an effective upper bound on the degree of such points over $K$ and therefore an upper bound on their order as torsion points. 
This completes the proof.

\section*{Acknowledgement}

The authors are grateful to Gabriel Dill, Linda Frey, Philipp Habegger, Igor Shparlinski and Umberto Zannier for helpful discussions. 

They moreover would like to thank the anonymous  referee for comments and suggestions that greatly improved this article.

The first author has done part of this work under the support of the Swiss National Science Foundation [165525].
The second author is supported by a Macquarie University Research Fellowship.


\end{document}